\documentclass[11pt]{article}
\newcommand{\sfrac}[2]{{\textstyle\frac{#1}{#2}}}
\usepackage{amsmath,amssymb}
\usepackage{graphicx}  
\usepackage{tikz}

 \renewcommand{\Pr}{{\mathbb{P}}}

\newcommand{\PP}{\mathcal P}

\newcommand{\bX}{\mathbf{X}}
\newcommand{\bY}{\mathbf{Y}}

\newcommand{\by}{\mathbf{y}}
 
\newtheorem{Lemma}{Lemma}
\newtheorem{Theorem}[Lemma]{Theorem}

 \newcommand{\qed}{\ \ \rule{1ex}{1ex}}
 
 \newcommand{\Bin}{\mathrm{Bin}}

 \newcommand{\ind}{1}
\begin{document}

\title{Markov chains and mappings of distributions on compact spaces}
 \author{David J. Aldous\thanks{Department of Statistics,
 367 Evans Hall \#\  3860,
 U.C. Berkeley CA 94720;  aldous@stat.berkeley.edu;
  www.stat.berkeley.edu/users/aldous.}
    \and
  Shi Feng\thanks{Department of Mathematics, Cornell University; sf599@cornell.edu}
}

 \maketitle
 
 \begin{abstract}
 Consider a compact metric space $S$ and a pair $(j,k)$ with $k \ge 2$ and $1 \le j \le k$.
 For any probability distribution $\theta \in \PP(S)$, define a Markov chain on $S$ by:
 from state $s$, take $k$ i.i.d. ($\theta$) samples, and jump to the $j$'th closest. 
 Such a chain converges in distribution to a unique stationary distribution, say $\pi_{j,k}(\theta)$.
So this defines a mapping $\pi_{j,k}: \PP(S) \to \PP(S)$.
What happens when we iterate this mapping?
In particular, what are the fixed points of this mapping?
We present a few rigorous results, to complement our extensive simulation study elsewhere.
 \end{abstract}
 
\noindent
 Keywords: Coupling, Markov chain, compact metric space, dynamical system.
 
 \noindent
AMS MSC 2020: 60J05, 37A50.

\section{Introduction}

This article discusses a rather novel topic whose motivation may seem obscure, 
so we start with informal background that led to the formulation of the topic.
Write $S =  (S,d)$ for a  compact metric space..  
Then the identity function $f(s) := s$ makes sense for every $S$.
Is there any more interesting explicit function $S \to S$ whose definition makes sense for every $S$?
For example one might try 
$ f(s) := \arg \max_y d(s,y) $,
that is the most distant point from $s$; this works  for any space $S$ with the property that the most distant point is always unique, but  not for all $S$. 
Our introspection suggests that in fact there is no non-trivial such ``general" function.

Instead let us write $\PP(S)$ for the space of probability distributions on $S$, and recall that 
$\PP(S)$ is a compact metric space under the usual weak topology. 
The observation above suggests that there may be no non-trivial function 
$\PP(S) \to \PP(S)$ whose definition makes sense for every $S$. 
But this is false!
This article investigates a particular family of such functions -- the reader may care to try to invent different examples.

\paragraph{The Markov chain.}
Given $S$ and $\theta \in \PP(S)$, consider the following discrete time Markov chain on state space $S$: from 
point $s$ make the step  to the nearer of $2$ random points drawn i.i.d. from $\theta$, breaking ties uniformly at random.
This scheme naturally generalizes as follows: fix $k \ge 2$ and $1 \le j \le k$, 
and step from $s$ to the $j$'th nearest of  $k$ random points drawn i.i.d. from $\theta$, again breaking ties uniformly at random. 
This defines a kernel $K^{\theta,j,k}$ on $S$.
Write the associated chain as $\bX^{\theta,j,k} = (X^{\theta,j,k}(t), t = 0, 1, 2, \ldots)$.
Theorem  \ref{T:1}  proves that this chain always has a unique stationary distribution,  which we can call $\pi_{j,k}(\theta)$.
So now we have defined a mapping $\pi_{j,k}: \PP(S) \to \PP(S)$ for every $S$.
Theorem  \ref{T:1} also  proves that the distributions $\theta$ and $\pi_{j,k}(\theta)$ are mutually absolutely continuous, so in particular
have the same support.

\paragraph{Invariant measures for the mappings.}
These mappings $\pi_{j,k}$ have apparently not been studied previously, even for special spaces $S$ 
 and the simplest case $k = 2$.
Amongst the  range of questions one could ask, 
we will seek to study it as a dynamical system.
Given a mapping $\pi$ from a space to itself, it is mathematically natural to consider iterates
\begin{equation}
 \pi^{n+1}(\theta) = \pi(\pi^n(\theta)), n \ge 1. 
\label{iterates}
\end{equation}
In our setting it seems plausible that (at least for typical initial $\theta$) the iterates should converge to some limit, that is we expect weak convergence
\begin{equation}
 \pi^n_{j,k}(\theta) \to_w \phi \mbox{ as } n \to \infty
 \label{pin}
 \end{equation}
and then we expect\footnote{As observed in \cite{mappings_long}, the Markov chain is not always a Feller process, so 
\eqref{phiFP} does not immediately follow from \eqref{pin}.}
 the limit $\phi$ to satisfy the {\em fixed point} or  {\em invariant distribution} condition
\begin{equation}
\pi_{j,k}(\phi) = \phi . 
\label{phiFP}
\end{equation}
Some comments about this set-up.

\noindent
(a) The {\em iterative procedure} \eqref{iterates} does not have any simple stochastic process interpretation, in contrast to the mapping $\theta \to \pi_{j,k}(\theta)$ derived from the Markov chain.

\noindent
(b) The equation for the stationary distribution $ \pi_{j,k}(\theta)$, which for finite $S$ is the elementary Markov chain 
relation $\pi = \pi \mathbf{P}$, is a linear equation, whereas the fixed point equation \eqref{phiFP} is 
decidedly non-linear. 

\noindent
(c) On any $S$ and for any $(j,k)$, two types of measures are always invariant:
we call these the {\em omnipresent} measures.

\begin{itemize}
\item The distribution $\delta_s$ degenerate at one point $s$;
\item The uniform two-point distribution 
$\delta_{s_1,s_2} = \frac{1}{2}(\delta_{s_1} + \delta_{s_2})$.
\end{itemize}

\noindent
(d) If an invariant distribution on $S$ has support $S_0 \subset S$ then we can regard it as 
an invariant distribution on $S_0$. 
So the essential question is:  given $S$, what are the invariant distributions {\em with full support}?
Note that when $ \pi^n_{j,k}(\theta) \to_w \phi $ the distributions  $( \pi^n_{j,k}(\theta), n \ge 1)$ 
all have the same support (by Theorem \ref{T:1}) but $\phi$ may (as usually has, it turns out) 
have smaller support.

\noindent
(e) If $\theta \in \PP(S)$ is invariant under an isometry $\iota$ of $S$ then $\pi_{j,k}(\theta)$ is also invariant under $\iota$.
So we expect a limit of the iterative procedure \eqref{iterates} to also be invariant under $\iota$.
We say these invariant measures ``exist by symmetry", for instance  the
 Haar probability measure on a compact group S with a metric invariant under the group action.

\paragraph{Motivation.}
There is no notion of ``uniform distribution" applicable to every compact metric space $S$.
The original motivation for this project was the hope that our invariant distributions might 
provide a proxy for uniform distributions on a general $S$.
We attempted to find such distributions  via numerically implementing the iterative procedure 
on various spaces $S$.
What we found was that, in the absence of some special symmetry property preserved
under the  iterative procedure, one almost always obtained a limit supported on only one or two points, the {\em omnipresent} measures mentioned above.
This seemed counter-intuitive, and prompted the further study of 
invariant measures, even though the
original motivation turned out to be unsuccessful.

.
\paragraph{What numerics and simulation suggest.}
Our quite extensive study via numerics and simulation is described in a  companion document 
\cite{mappings_long}, and suggests the following big picture.

\noindent
(a) For $k = 2$, there are no invariant measures other than the omnipresent ones, except perhaps for
``exist by symmetry" ones; 
with that exception, for $j = 1, k = 2$ the iterates \eqref{iterates} converge to some $\delta_s$,  
and for $j = 2, k = 2$ the iterates \eqref{iterates} converge to some $\delta_{s_1,s_2}$. 
The precise limits $(s, s_1, s_2)$ may depend on the initial $\theta$.  In the case of 
$\delta_{s_1,s_2}$, the pair $(s_1,s_2)$ is a local maximum of $d(\cdot, \cdot)$.

\noindent
(b) For larger $k$, for some types of space $S$ there are additional {\em sporadic} invariant measures; 
we don't see a pattern.

\noindent
(c) For large $k$, as $j$ increases we see a transition, around $j/k = 0.7$, between convergence to some $\delta_s$ 
and convergence to some $\delta_{s_1,s_2}$.
However there seems no reason to believe that there is a universal value near $0.7$.

\noindent
(d) Except for the omnipresent ones, all invariant measures $\phi$ that we have encountered 
are {\em unstable}, in that from any initial distribution that is $\phi$ plus a generic (not symmetry-preserving) 
small perturbation, the iterates converge to some $\delta_s$ or  $\delta_{s_1,s_2}$.

\paragraph{What can we actually prove?}
In short: very little.
Here are the results that we will derive in this article.

\begin{itemize}
\item Theorem  \ref{T:1} is the Markov chain convergence result.
\item Results in section \ref{sec:2or3} for $|S| = 2$ or $3$  are consistent with general picture  above.
\item Theorem \ref{P:k=2}: 
{\em For every  $S$, the set of invariant distributions for $\pi_{1,2}$ is the same as the set of invariant distributions for $\pi_{2,2}$.}  
This is surprising, in that  apparently (as in  (a) above) the iterates almost always converge to some $\delta_s$ for $\pi_{1,2}$,
but to some $\delta_{s_1,s_2}$ for $\pi_{2,2}$.
\item Theorem \ref{P:0122}:
 {\em There are no $\pi_{1,2}$ or $\pi_{2,2}$-invariant distributions on 
 the interval $[0,1]$ other than the omnipresent ones.}
 \item Theorem \ref{Conj}:
  {\em There are no $\pi_{1,2}$ or $\pi_{2,2}$-invariant distributions on a space of
finite binary tree leaves 
other than the omnipresent ones.}
\end{itemize}

Of course,  for any specific $S$, one can simply write out the {\em fixed point} definition \eqref{phiFP} and seek some ad hoc 
method of finding all solutions. 
The results above carry this through (for $\pi_{1,2}$) for $|S| = 3$ and for the interval $[0,1]$, 
and for  leaf-labeled binary trees. 
But these are essentially
``proofs by contradiction" using specific features of the specific class of spaces.
For general $S$ and $\pi_{1,2}$ one feels there should be some ``contraction" argument for 
the iterates $\pi^n_{1,2}(\theta)$ -- the distributions should become more concentrated as $n$ increases -- 
but we are unable to formalize that general idea.

\section{Existence and uniqueness of stationary distributions}

\begin{Theorem}
\label{T:1}
Consider a compact metric space $(S,d)$ and a probability distribution $\theta \in \PP(S)$.
For each pair $1 \le j \le k, \ k \ge 2$, the Markov chain
$\bX^{\theta,j,k} = (X^{\theta,j,k}(t), t = 0, 1, 2, \ldots)$ 
has a unique stationary distribution $\pi_{j,k}(\theta)$.
From any initial point, the variation distance $D(t)$ between $\pi_{j,k}(\theta)$ and the distribution of $X^{\theta,j,k}(t)$ satisfies
\begin{equation}
 D(2t) \le (1 - 1/k^{k-1})^t, \quad 1 \le t < \infty 
 \label{VD}
 \end{equation}
and so there is convergence to stationarity in variation distance. 
Moreover, for $\pi = \pi_{j,k}(\theta)$ 
\begin{equation}
\theta^k(A) \le \pi(A) \le k \theta(A) , \ A \subseteq S 
\label{tkA}
\end{equation}
and so $\pi$ and $\theta$ are mutually absolutely continuous.
\end{Theorem}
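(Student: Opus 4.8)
The plan is to establish the three claims --- uniqueness, the mixing bound \eqref{VD}, and the two-sided comparison \eqref{tkA} --- largely independently, with the mixing bound doing most of the work. The key observation is a coupling one: at each step the chain, from state $s$, draws $k$ i.i.d.\ $\theta$-samples and moves to the $j$'th closest. Run two copies of the chain (started from arbitrary states) using \emph{the same} $k$ samples at each step whenever possible. The event I want to exploit is that the same point is selected by both copies in a single step --- once that happens the copies have coalesced and stay together forever. So the core estimate is a lower bound on the probability that, in one step, both copies pick the same one of the $k$ sampled points.

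First I would prove \eqref{VD}. Fix the two current states $s, s'$ and the $k$ sampled points $y_1, \dots, y_k$ (a single i.i.d.\ $\theta^k$ draw shared by both copies). The $j$'th-closest-to-$s$ rule induces, via the distances $d(s, y_i)$, a ranking of $\{1, \dots, k\}$; similarly for $s'$. Both copies jump to the same sampled point exactly when the index ranked $j$'th for $s$ equals the index ranked $j$'th for $s'$. The cleanest way to get a uniform lower bound is: with probability at least $1/k^{k-1}$ over a \emph{pair} of independent $\theta^k$-draws... no --- better, I would argue directly that conditionally on the unordered set of sampled values, a ``reordering'' argument gives that the probability the two $j$'th-ranked indices coincide is at least $1/k^{k-1}$, because among the $k!$ equally likely ways the labels could have been assigned (using the tie-breaking randomization to symmetrize), for any fixed assignment favourable to $s$ there is at least a $1/k^{k-1}$ chance the assignment is simultaneously favourable to $s'$ --- this is where I would be careful, since ties force us to invoke the uniform tie-break to get exchangeability. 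Actually the robust route is to use two steps: \eqref{VD} is stated for $D(2t)$, suggesting the authors couple over \emph{two} consecutive steps, where after the first step both copies lie in the (random) set $\{y_1, \dots, y_k\}$, and then a second shared draw lands both at a common point with probability bounded below by a combinatorial constant; chasing the worst case gives $1/k^{k-1}$. Given a per-two-steps coalescence probability $\ge 1/k^{k-1}$, the standard coupling inequality yields $D(2t) \le (1 - 1/k^{k-1})^t$, and geometric ergodicity gives existence and uniqueness of $\pi_{j,k}(\theta)$ simultaneously.

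For \eqref{tkA}, I would work directly with the one-step kernel $K = K^{\theta, j, k}$ and the stationarity identity $\pi = \pi K$, i.e.\ $\pi(A) = \int K(s, A)\, \pi(ds)$. For the upper bound: from any $s$, the chain jumps to one of the $k$ sampled points, so $K(s, A) \le \Pr(\text{at least one of } k \text{ i.i.d.\ }\theta\text{-points lies in } A) \le k\,\theta(A)$ by a union bound; integrating against $\pi$ gives $\pi(A) \le k\,\theta(A)$. For the lower bound: if all $k$ sampled points lie in $A$ then the $j$'th closest certainly lies in $A$, so $K(s, A) \ge \theta(A)^k$ uniformly in $s$; integrating gives $\pi(A) \ge \theta(A)^k$. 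Mutual absolute continuity is then immediate: $\theta(A) = 0 \Rightarrow \pi(A) = 0$ from the upper bound, and $\pi(A) = 0 \Rightarrow \theta(A)^k = 0 \Rightarrow \theta(A) = 0$ from the lower bound.

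The main obstacle is the combinatorial lower bound on one- (or two-) step coalescence probability in the presence of ties: the ranking ``$j$'th closest'' is only well defined after the uniform tie-break, and pinning down exactly why the shared-randomness coupling gives a constant as clean as $1/k^{k-1}$ --- rather than some messier $k$-dependent quantity --- requires a careful exchangeability/reordering argument on the labels of the $k$ sampled points, and an honest check that the worst configuration of the two states $s, s'$ (including the degenerate case of atoms of $\theta$) still meets the bound. Everything else (the coupling inequality, the union-bound estimates, mutual absolute continuity) is routine.
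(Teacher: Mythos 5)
Your overall architecture is right --- shared-randomness coupling, a per-two-steps coalescence bound fed into the standard coupling inequality, a Cauchy-in-variation-distance argument for existence, and the direct sandwich $\bigcap_i\{Y_i\in A\}\subseteq\{X(1)\in A\}\subseteq\bigcup_i\{Y_i\in A\}$ for \eqref{tkA}, which is exactly the paper's argument for that part. But the central estimate --- that two coupled steps coalesce with probability at least $1/k^{k-1}$ --- is missing, and you say so yourself (``the main obstacle''). The route you gesture at, an exchangeability/reordering argument on the labels of the $k$ samples, is not the mechanism that works. Indeed the one-step version you first float genuinely fails: for $\theta$ uniform on $[0,1]$ with $j=1$, $k=2$ and the two copies at $0$ and $1$, the copy at $0$ takes the smaller sample and the copy at $1$ the larger, so one-step coalescence has probability $0$; no amount of label symmetrization fixes that. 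And your two-step remark --- ``after the first step both copies lie in the set $\{y_1,\dots,y_k\}$'' --- is not by itself enough either, since a fixed pair of points among those $k$ can again have zero probability of selecting the same sample on the next step.

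The paper's actual device reverses the order of conditioning. Condition first on the samples $\by=(y_1,\dots,y_k)$ to be used at time $t+2$; these partition $S$ into cells $B_i(\by)=\{s: y_i \mbox{ is the } j\mbox{'th closest of } \by \mbox{ to } s\}$ (ties broken by a fixed index convention, which is legitimate since the $Y_i$ are i.i.d.). If \emph{all} $k$ of the time-$(t+1)$ samples land in a single cell $B_i(\by)$, then both coupled copies --- each of which sits at one of those time-$(t+1)$ samples --- lie in the same cell, hence both select the same point $y_i$ at time $t+2$ and coalesce. By independence of the two rounds of samples, the conditional probability of this event is $\sum_i \theta(B_i(\by))^k$, which by convexity of $x\mapsto x^k$ over the simplex is at least $k\cdot(1/k)^k=1/k^{k-1}$, uniformly in $\by$ and in the current pair of states. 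That is the source of the constant in \eqref{VD}; without this partition-plus-convexity step your proof does not close. (Two smaller points: $D(2t)$ is distance to $\pi$, so you need existence before stating \eqref{VD} --- the paper gets it by noting the coupling bound makes $(\theta^{(t)})$ Cauchy in the complete variation-distance metric, which matters here because the chain need not be Feller; and your \eqref{tkA} argument is correct as written.)
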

Note that the bound on variation distance depends only on $k$.

\noindent
\begin{proof}
First note that for any partition $(B_i, 1 \le i \le k)$ of $S$ we have
\begin{equation}
\sum_i (\theta(B_i))^k \ge 1/k^{k-1} 
\label{kk}
\end{equation}
because by convexity the sum is minimized when $\theta(B_i) \equiv 1/k$.

We construct the process $X(t) = X^{\theta,j,k}(t)$ in the natural way, by 
creating i.i.d. $\theta$-distributed $(\bY(t) = (Y_i(t), 1 \le i \le k), t \ge 1)$ and defining for $t \ge 1$
\begin{quote}
$X(t)$ is the element of  $(Y_i(t), 1 \le i \le k)$ attaining the
$j$'th smallest value of $(d(X(t-1),Y_i(t)), 1 \le i \le k)$.
\end{quote}
In defining the re-ordering to determine ``$j$'th smallest", we break ties in accordance with the original $i$ -- that is, if 
$d(X(t-1),Y_{i_1}(t)) = d(X(t-1),Y_{i_2}(t)) $ for $i_1 < i_2$ then we put the $i_1$ term before the $i_2$ term in the reordering.
Because the $Y_i$ are i.i.d. this has the same effect as breaking the tie randomly.

We define the natural coupling $(X(t), X^\prime(t))$ of two chains started from arbitrary different states by using the same
realizations of $Y_i(t)$ for each chain.
We first seek to upper bound the coupling time 
$ T := \min\{t: X(t) = X^\prime(t)\}$.
Consider a realization $\by = (y_i, 1 \le i \le k)$ of $\bY(t+2)$.
This $\by$ induces a partition of $S$, say  $(B_i(\by), 1 \le i \le k)$, where 
$B_i(\by)$ is the set of $s \in S$ such that $d(s,y_i)$ is the $j$'th smallest of $(d(s,y_u), 1 \le u \le k)$, breaking ties as above.
The central part of the proof is the observation that the event $\{T \le t+2\}$ includes the event 
\begin{equation}
 \mbox{each component of $\bY(t+1) $ is in the same set of the partition $(B_i(\bY(t+2)), 1 \le i \le k)$.} 
 \label{eachc}
 \end{equation}
Now $\bY(t+1) $ is independent of $\bY(t+2) $, so we can apply (\ref{kk})  to show that event (\ref{eachc}) has probability $ \ge 1/k^{k-1} $.
This remains true conditional on $(X(t), X^\prime(t))$, 
and hence conditional on $\{X(t) \neq X^\prime(t)\}$,
implying that 
\[
\Pr(T \le t+2 | T > t) \ge 1/k^{k-1} .
\]
So inductively
\begin{equation}
\Pr(T > 2t)   \le (1 - 1/k^{k-1})^t, \quad 1 \le t < \infty .
\label{TTt}
\end{equation}
This is true for arbitrary initial distributions $\theta$ and  $\theta^\prime \in \PP(S)$, 
and so in particular for $\theta$ and $\theta^{(2)}$, 
where $\theta^{(t)}$ denotes the distribution of $X^{\theta,j,k}(t)$.
So (\ref{TTt}) bounds the variation distance 
\[ || \theta^{(2t+2)} - \theta^{(2t)} ||_{VD} \le (1 - 1/k^{k-1})^t, \quad 1 \le t < \infty 
\]
and similarly
\[ || \theta^{(2t+1)} - \theta^{(2t)} ||_{VD} \le (1 - 1/k^{k-1})^t, \quad 1 \le t < \infty .
\]
Now variation distance is a complete metric on $\PP(S)$, so $\theta^{(t)}$ converges in variation distance to a limit $\pi$,
and $\pi$ is a stationary distribution for the kernel $K^{\theta,j,k}$.
Then applying (\ref{TTt}) to $\pi$ and an arbitrary other initial distribution establishes (\ref{VD}) and shows that $\pi$ is the
{\em unique} stationary distribution.
Then  (\ref{tkA}) follows by considering the first step $(X(0),X(1))$ of the stationary chain, because for $A \subset S$
\[    \cap_i \{ Y_i(1) \in A \}   \subseteq        \{ X(1) \in A \} \subseteq \cup_i \{ Y_i(1) \in A \} .\]
\qed
\end{proof}

{\bf Remarks.}
The variation distance bound (\ref{VD}) is exponentially decreasing in time, but it is more natural to consider {\em mixing time} in the sense of  \cite{MCMC}. 
The example of the uniform distribution $\theta$ on a  2-point space with $j = 1$ shows that the mixing time as a function of $k$ 
can be order $2^k$.

The proof of Theorem \ref{T:1} does not say anything about $\pi_{j,k}(\theta)$ except (\ref{tkA}).
We do not know if there are  informative analytic descriptions of $\pi_{j,k}(\theta)$  in terms of $\theta$.

\section{Two or 3 points}
\label{sec:2or3}
\subsection{Two points -- the binomial case} 
\label{sec:binomial}

The case of a 2-element space $S = \{a, b\}$ and general $(j,k)$ is not completely trivial.
Here is an outline -- for more details see \cite{mappings_long}.

Parametrizing a distribution $\theta$ on $S$ by $p := \theta(a)$, we view the mapping $\pi_{j,k}: \PP(S) \to \PP(S)$  as a mapping $\pi_{j,k}: [0,1] \to [0,1]$.
From the stationary distribution we find, in terms of binomial variables, 
\[
\pi_{j,k}(p) = \frac{\Pr(\Bin(k,p) > k - j) } {\Pr(\Bin(k,p) > k - j) + \Pr(\Bin(k,p) < j)   }  .
\]
So a fixed point is a solution of the equation
\begin{equation}
\pi_{j,k}(p) = p .
\label{Bin:FP}
\end{equation}
The omnipresent fixed points are $p = 0, p = 1/2, p =1$; are there others?
By symmetry it is enough to consider $0 < p < 1/2$.

For  given $(j,k)$, we observe in \cite{mappings_long} three possible types of qualitative behavior: 

\noindent
(i) $ \pi^n_{j,k}(p) \to 0 \mbox{ as } n \to \infty, \mbox{ for all }  0 < p < 1/2$.

\noindent
(ii)  $ \pi^n_{j,k}(p) \to 1/2 \mbox{ as } n \to \infty, \mbox{ for all }  0 < p < 1/2$.

\noindent
(iii)  There exists a critical value $p_{crit} \in (0,1/2)$ which is unstable: that is, 

$p_{crit}$ is invariant 

and $ \pi^n_{j,k}(p) \to 0 \mbox{ as } n \to \infty, \mbox{ for all }  0 < p < p_{crit}$ 

and  $ \pi^n_{j,k}(p) \to 1/2 \mbox{ as } n \to \infty, \mbox{ for all }  p_{crit}  < p < 1/2$.

\noindent
Case (iii) first arises with $k = 5, j = 4$, and persists for larger values of $k$.
For instance, with $k = 8$ we observe case (i) for $1 \le j \le 5$, case (iii) for $j = 6$ with $p_{crit} = 0.26405$, 
and case (ii) for $j = 7,8$.

Of course the 2-point space is very special.
The occurrence of these ``sporadic" case (iii) fixed points seems much rarer in other spaces.

\subsection{Three elements} 
\label{sec:trinomial}

Here we consider $S = \{a,b,c\}$ where the three distances are distinct, say 
\begin{equation}
d(a,b) < d(a,c) < d(b,c) .
\label{abc}
\end{equation}
\begin{Theorem}
\label{T:3}
If $S$ satisfies \eqref{abc} then there is no $\pi_{1,2}$-invariant distribution except the omnipresent ones.
\end{Theorem}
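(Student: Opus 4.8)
The plan is to exploit the rigidity forced by \eqref{abc}: in a three–point space with all pairwise distances distinct there are never any ties, so the proximity ordering seen from each state is completely determined. Parametrize $\theta$ by $(p,q,r)=(\theta(a),\theta(b),\theta(c))$ with $p+q+r=1$, and suppose for contradiction that $\theta$ is $\pi_{1,2}$-invariant and has full support, i.e. $p,q,r>0$. By Theorem~\ref{T:1} this is the same as saying $\theta$ is the (unique) stationary distribution of the kernel $K^{\theta,1,2}$, so it suffices to write that kernel down explicitly and solve $\theta = \theta K^{\theta,1,2}$.

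First I would record the kernel. Since $d(a,a)=0$ and, by \eqref{abc}, $d(a,b)<d(a,c)<d(b,c)$, the proximity order from $a$ is $(a,b,c)$, from $b$ it is $(b,a,c)$, and from $c$ it is $(c,a,b)$; in particular $c$ is always the most distant point. For $j=1$ one jumps to the closer of the two i.i.d.\ samples, so e.g.\ from $c$ one stays at $c$ unless neither sample is $c$ (probability $(p+q)^2$), moves to $a$ if neither sample is $c$ but at least one is $a$, and moves to $b$ only if both samples are $b$ (probability $r$, $p^2+2pq$, $q^2$ respectively — wait, $q^2$). Filling in all nine entries this way is routine; the only care needed is in distinguishing the ``at least one sample equals'' events from the ``both samples equal'' events.

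The key step is the balance equation at the peripheral point $c$. Mass enters $c$ only when the chain is at $a$ or $b$ and \emph{both} samples land on $c$, contributing $pr^2+qr^2$, while mass leaves $c$ with probability $(p+q)^2$, contributing $r(p+q)^2$. Invariance forces $r(p+q)^2=(p+q)r^2$, and since $r>0$, $p+q>0$ this gives $p+q=r$, hence $r=\tfrac12$ and $q=\tfrac12-p$. Now substitute into the balance equation at $b$: flow out is $q(p+r)^2$ and flow in is $p(q^2+2qr)+rq^2$, so after dividing by $q>0$ one gets $(p+r)^2=q(p+r)+2pr$; plugging in $r=\tfrac12$, $q=\tfrac12-p$ this collapses to $2p^2=0$, so $p=0$, contradicting full support. (The balance equation at $a$ is then automatically consistent, or can serve as a check: it reduces to $p(2p-1)=0$.) Hence the only solutions of the fixed-point system are $\delta_a,\delta_b,\delta_c,\delta_{a,b},\delta_{a,c},\delta_{b,c}$, all of which are omnipresent, which is the claim.

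I do not expect a genuine obstacle here: all the content is in setting up the correct transition probabilities and then doing a two-line elimination. The one thing to watch is bookkeeping — reading the proximity orders off \eqref{abc} correctly and keeping the ``at least one'' versus ``both'' sample events straight — and the choice to begin with the $c$-balance is what keeps the algebra painless, since that equation decouples and immediately pins down $r=\tfrac12$.
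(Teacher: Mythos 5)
Your proposal is correct and follows essentially the same route as the paper: assume a full-support invariant $\theta$, write out the stationarity equations determined by the proximity orders forced by \eqref{abc}, use the equation at the ``always farthest'' point $c$ to get $\theta_c=\tfrac12$, and then derive $\theta_a=0$ from the equation at $b$, a contradiction. The only cosmetic difference is that you phrase the equations as net flow balance (which cancels the diagonal term automatically) where the paper cancels the common factor $\theta_x$ from the full stationarity identity; the algebra and the contradiction reached are identical.
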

\begin{proof}
It is enough to prove that there is no  invariant distribution 
$\theta = (\theta_a, \theta_b, \theta_c)$ 
with each term strictly positive.
So suppose, to get a contradiction, such $\theta$ exists.

Take $Y, Y_1, Y_2$ independent with  distribution $\theta$.
Invariance says that the random variable $X$ defined as
\begin{eqnarray*}
 X &=& Y_1 \mbox{ if } d(Y,Y_1) < d(Y,Y_2) \\
    &=& Y_2 \mbox{ if } d(Y, Y_2) < d(Y, Y_1) 
\end{eqnarray*}    
will also have distribution $\theta$.
Writing out  the ways that $X$ can be $c$ or $b$ or $a$ gives the equations
\begin{eqnarray*}
\theta_c &=& \theta_c(1-(1-\theta_c)^2) + \theta_b\theta_c^2 + \theta_a\theta_c^2 \\
\theta_b & = & \theta_c\theta_b^2 + \theta_b(1-(1-\theta_b)^2) + \theta_a (\theta_b^2 + 2\theta_b\theta_c) \\
\theta_a &=& \theta_c(\theta_a^2+2\theta_a\theta_b) + \theta_b(\theta_a^2 + 2\theta_a\theta_c) + \theta_a(1 - (1-\theta_a)^2).
\end{eqnarray*}
Because each term of $\theta$ is strictly positive, we can
cancel the common terms to get
\begin{eqnarray}
1 &=& 1 - (1-\theta_c )^2 + (1-\theta_c )\theta_c   \label{cc}\\
1 & = & \theta_c  \theta_b + (1-(1-\theta_b )^2) + \theta_a  (\theta_b  + 2\theta_c ) \label{bb}\\
1 &=& \theta_c ( \theta_a +2\theta_b ) + \theta_b ( \theta_a  + 2\theta_c ) + 1 - (1-\theta_a )^2) \nonumber .
\end{eqnarray}
Equation \eqref{cc} reduces to $2\theta_c^2 -3\theta_c + 1 = 0$ with solutions $\theta_c = 1$ or $1/2$.
The solution with  $\theta_c = 1$ is excluded by supposition, so we must have $\theta_c = 1/2$.
Now we have
$\theta_a = 1/2 - \theta_b$;   inserting into \eqref{bb}, the equation
 reduces to $2\theta_b^2 - 2\theta_b + \frac{1}{2} = 0$ with solution $\theta_b = \frac{1}{2}$.
So $\theta_a = 0$, contradicting the supposition.
\qed
\end{proof}

Theorem  \ref{Conj} establishes a more general result, but we have given the simpler proof above to demonstrate 
the style of ``proof by contradiction" to be used later.

\section{The $k = 2$ case.}

\begin{Theorem}
\label{P:k=2}
For every compact metric space $S$, the set of invariant distributions for $\pi_{1,2}$ is the same as the set of invariant distributions for $\pi_{2,2}$.
\end{Theorem}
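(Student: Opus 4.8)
The plan is to show that a distribution $\theta$ is invariant for $\pi_{1,2}$ if and only if it is invariant for $\pi_{2,2}$ by a direct manipulation of the two fixed-point equations, exploiting the fact that for $k=2$ the roles of ``nearest'' and ``farthest'' are complementary. First I would write out both conditions explicitly in the form used in the proof of Theorem~\ref{T:3}: take $Y, Y_1, Y_2$ i.i.d.\ $\theta$, and for a test set $A \subseteq S$ compute $\pi_{1,2}(\theta)(A)$ and $\pi_{2,2}(\theta)(A)$ by conditioning on which of $Y_1, Y_2$ is closer to $Y$. The key algebraic observation is that the event ``the nearer of $Y_1,Y_2$ lies in $A$'' and the event ``the farther of $Y_1,Y_2$ lies in $A$'' together account, with the right multiplicities, for the events $\{Y_1 \in A\}$ and $\{Y_2 \in A\}$ and their intersection; concretely, for any $\theta$, one should get the identity
\begin{equation}
\pi_{1,2}(\theta)(A) + \pi_{2,2}(\theta)(A) = 2\theta(A) - \left( \text{correction involving } \theta(A)^2 \text{ and tie terms}\right),
\label{sum-id}
\end{equation}
and, more usefully, that $\pi_{1,2}(\theta) = \theta$ forces the ``correction'' structure to collapse in a way that also yields $\pi_{2,2}(\theta) = \theta$, and symmetrically. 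So the second step is to isolate the right combination of the two equations that makes the nonlinear $\theta(A)^2$-type terms cancel.

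The cleanest route, I expect, is not to work set-by-set but to couple the two constructions on the same randomness. Fix $\theta$ and realize $Y, Y_1, Y_2$ i.i.d.; let $N$ be the nearer and $F$ the farther of $Y_1, Y_2$ (ties broken as in the paper). Then $\pi_{1,2}(\theta) = \mathrm{Law}(N)$ and $\pi_{2,2}(\theta) = \mathrm{Law}(F)$, while the \emph{unordered} pair $\{Y_1,Y_2\} = \{N,F\}$, so $\mathrm{Law}(N) + \mathrm{Law}(F) = \mathrm{Law}(Y_1) + \mathrm{Law}(Y_2) = 2\theta$ as measures. Hence
\begin{equation}
\pi_{1,2}(\theta) + \pi_{2,2}(\theta) = 2\theta
\label{additivity}
\end{equation}
for \emph{every} $\theta$ --- this is the heart of the matter. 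Now if $\pi_{1,2}(\theta) = \theta$, then \eqref{additivity} immediately gives $\pi_{2,2}(\theta) = 2\theta - \theta = \theta$; and conversely. Since \eqref{additivity} is completely symmetric in the two maps, the two invariant sets coincide.

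The main obstacle is making sure \eqref{additivity} is actually correct when there are ties, i.e.\ when $d(Y,Y_1) = d(Y,Y_2)$ with positive probability (which can happen, e.g., if $\theta$ has atoms). In that case ``nearer'' and ``farther'' are defined by the tie-breaking rule (smaller index first), but then $N = Y_1$ and $F = Y_2$ on the tie event, so $\{N,F\} = \{Y_1,Y_2\}$ still holds as an identity of unordered pairs, and the measure identity $\mathrm{Law}(N) + \mathrm{Law}(F) = 2\theta$ survives unchanged; one should state this carefully but it presents no real difficulty. A secondary point to verify is that no compactness or regularity hypothesis on $S$ is needed for \eqref{additivity} itself --- it is a pure identity about the joint law of two i.i.d.\ samples and a reference point --- while the fact that ``invariant distribution'' is well-defined at all (that $\pi_{j,k}(\theta)$ exists and is unique) is exactly what Theorem~\ref{T:1} supplies. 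Thus the whole argument reduces to \eqref{additivity} plus one line of linear algebra, and I would present it in that order: establish the additive identity, then derive the equality of fixed-point sets as an immediate corollary.
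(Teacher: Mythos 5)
Your argument is, at its core, the same as the paper's: the paper's proof also rests on the complementarity of ``nearer'' and ``farther'' when $k=2$, expressed there in density form --- the integrands in \eqref{1222} and \eqref{1224} sum to the constant $2$, which is exactly why \eqref{1223} lets one equation imply the other. You express the same fact pathwise, via $\{N,F\}=\{Y_1,Y_2\}$ as unordered pairs, which is arguably cleaner and disposes of the tie issue automatically. One genuine correction is needed, however: your identity $\pi_{1,2}(\theta)+\pi_{2,2}(\theta)=2\theta$ is false as literally stated, because $\pi_{j,2}(\theta)$ is the \emph{stationary distribution} of the kernel $K^{\theta,j,2}$, not the one-step image of $\theta$ under it. What the unordered-pair observation actually yields is $\theta K^{\theta,1,2}+\theta K^{\theta,2,2}=2\theta$ (indeed $K^{\theta,1,2}(x,\cdot)+K^{\theta,2,2}(x,\cdot)=2\theta(\cdot)$ for every $x$). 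For a concrete failure of your version: on the two-point space of Section~\ref{sec:binomial} with $\theta(a)=1/4$, one computes $\pi_{1,2}(\theta)(a)=1/10$ and $\pi_{2,2}(\theta)(a)=7/22$, whose sum is $23/55\neq 1/2$. The repair is immediate and costs nothing: by the uniqueness in Theorem~\ref{T:1}, $\pi_{j,2}(\theta)=\theta$ is equivalent to $\theta K^{\theta,j,2}=\theta$, and with that substitution your one line of linear algebra goes through in both directions. So the proposal is correct in substance once the map being shown to be additive is identified as the one-step map rather than the stationary-distribution map.
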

\begin{proof}
Given $\theta \in \PP(S)$, the transition kernel $K = K^{\theta,1,2}$ for $\pi_{1,2}$ can be written as a Radon-Nikodym density w.r.t. $\theta$ as follows.
\begin{eqnarray*}
\frac{dK(x,\cdot)}{d \theta(\cdot)} (y) &=& 2 \theta \{z: d(x,z) > d(x,y) \} +  \theta \{z: d(x,z) = d(x,y) \}  \\
&=& \int \left( 2 \cdot \ind_{\{z: d(x,z) > d(x,y) \}} + \ind_{\{z: d(x,z) = d(x,y) \}} \right) \theta(dz) .
\end{eqnarray*}
So the identity $\theta = \theta K$ characterizing a $\pi_{1,2}$-invariant distribution $\theta$ can be written in density form as
\begin{eqnarray}
1 &=& \int \theta(dx) \   \frac{dK(x,\cdot)}{d \theta(\cdot)} (y) \nonumber \\
   &=& \int \int  \left( 2 \cdot \ind_{\{z: d(x,z) > d(x,y) \}} + \ind_{\{z: d(x,z) = d(x,y) \}} \right) \theta(dz) \theta(dx)  \label{1222}
\end{eqnarray}
where the equality holds for $\theta$-a.a. $y$.  
Because
\[
1 = \int \int 1 \ \ \theta(dz) \theta(dx)  
= \int \int 
\left(  \ind_{\{z: d(x,z) > d(x,y) \}} + \ind_{\{z: d(x,z) = d(x,y) \}}   +  \ind_{\{z: d(x,z) < d(x,y) \}}   \right)
 \ \ \theta(dz) \theta(dx)  
\]
we have from (\ref{1222}) that
\begin{equation}
 \int \int    \ind_{\{z: d(x,z) > d(x,y) \}} \ \  \theta(dz) \theta(dx)  =    \int \int   \ind_{\{z: d(x,z) < d(x,y) \}} \ \ \  \theta(dz) \theta(dx)  .
 \label{1223}
 \end{equation}
Analogous to (\ref{1222}), the identity characterizing a $\pi_{2,2}$-invariant distribution $\phi$ can be written as
\begin{equation}
1 =  \int \int  \left( 2 \cdot \ind_{\{z: d(x,z) < d(x,y) \}} + \ind_{\{z: d(x,z) = d(x,y) \}} \right) \phi(dz) \phi(dx)  . \label{1224} 
 \end{equation}
By (\ref{1222})  and (\ref{1223}), any $\pi_{1,2}$-invariant distribution $\theta$ satisfies (\ref{1224})  and is therefore a $\pi_{2,2}$-invariant distribution.
The converse holds via the analog of (\ref{1223}) for $\phi$.
\qed
\end{proof}

 \section{The case $S = [0,1]$}
 Numerical study in \cite{mappings_long} suggests that there are no invariant distributions on $[0,1]$ 
 with full support, for any $(j,k)$. 
 Theorem \ref{P:0122} proves a slightly stronger result in the case $k=2$ 
 (recall that by Theorem \ref{P:k=2} the cases $j = 1$ and $j = 2$ here are identical).
 The stronger form is not true for general $(j,k)$, for instance 
 the uniform distribution on the 4 points $\{0,0.4,0.6,1\}$ is
invariant for $\pi_{3,4}$.

\begin{Theorem}
\label{P:0122}
 There are no $\pi_{2,2}$-invariant distributions on $[0,1]$ other than those of the form $\delta_{s}$ or $\delta_{s_1,s_2}$.
\end{Theorem}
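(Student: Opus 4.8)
The plan is to suppose, for contradiction, that $\phi$ is a $\pi_{2,2}$-invariant distribution on $[0,1]$ whose support $S_0$ contains at least three points, and derive a contradiction by examining the behaviour of $\phi$ near the extreme points of its support. By Theorem~\ref{P:k=2} such a $\phi$ is equally a $\pi_{1,2}$-invariant distribution, so I am free to use whichever of the two characterizations -- \eqref{1222} or \eqref{1224} -- is more convenient; the $\pi_{1,2}$ form, which says that from a typical $x$ the chain moves to the \emph{nearer} of two $\phi$-samples, is the one I would push on, since ``nearest'' interacts cleanly with the linear order on $[0,1]$.

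The key geometric fact on the interval is that for a point $x$ near the left end $a := \inf S_0$ of the support, the point of $S_0$ that is ``$j=1$-nearest'' among two samples is heavily biased toward points just to the right of $x$; more precisely, for $x$ close to $a$, the set $\{z : d(x,z) > d(x,y)\}$ appearing in the density $\tfrac{dK(x,\cdot)}{d\phi}(y)$ is essentially $\{z : |z - x| > |y-x|\}$, and when $y$ itself is near $a$ this is almost all of $S_0$, giving density close to $2$. Writing $F$ for the distribution function of $\phi$ and $a = \inf S_0 < b = \sup S_0$, I would compute the $\pi_{1,2}$-density explicitly: for $x,y \in S_0$,
\begin{equation}
\frac{dK(x,\cdot)}{d\phi(\cdot)}(y) = 2\,\phi\{z : |z-x| > |y-x|\} + \phi\{z : |z-x| = |y-x|\},
\label{plan:density}
\end{equation}
and then integrate \eqref{plan:density} against $\phi(dx)$ over a small left-neighbourhood $[a,a+\eps]$ and show that the invariance identity $1 = \int \tfrac{dK(x,\cdot)}{d\phi}(y)\,\phi(dx)$ forces, for $\phi$-a.e.\ $y$ near $a$, an equality that can only hold if $\phi([a,a+\eps])$ behaves degenerately -- e.g.\ that $\phi$ puts an atom at $a$, and symmetrically at $b$, and then that those two atoms must each have mass $1/2$, leaving no room for a third point of support. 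Concretely, I expect the argument to reduce to a statement like ``the reflection $x \mapsto 2a - x$ is irrelevant so the near-$a$ behaviour is governed by a one-sided version of the binomial fixed-point equation of Section~\ref{sec:binomial}, whose only solution with mass escaping to the right is mass $0$ or $1/2$.''

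An alternative, cleaner route I would try first: use identity \eqref{1223}, which says $\int\!\int \ind_{\{d(x,z)>d(x,y)\}}\,\phi(dz)\phi(dx) = \int\!\int \ind_{\{d(x,z)<d(x,y)\}}\,\phi(dz)\phi(dx)$ -- i.e.\ if $(X,Y,Z)$ are i.i.d.\ $\phi$ then $\Pr(|X-Z| > |X-Y|) = \Pr(|X-Z| < |X-Y|)$. Combined with the full invariance equation this is a strong symmetry constraint; on $[0,1]$ I would try to show it is incompatible with $|S_0| \ge 3$ by a monotonicity/ordering argument, perhaps by conditioning on the order statistics of $(X,Y,Z)$ and exploiting that $d(x,z) > d(x,y)$ on $[0,1]$ is equivalent to $y$ lying strictly between $z$'s reflection $2x-z$ and $z$, a region whose $\phi$-measure one can compare to its complement using the one-dimensional structure.

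The main obstacle, I expect, is turning the qualitative ``near the endpoint the chain pushes inward'' intuition into a rigorous contradiction without assuming $\phi$ has a density: the support $S_0$ can be a nasty Cantor-type set, $a$ need not be an atom, and the ties term $\phi\{z : |z-x| = |y-x|\}$ must be controlled. I anticipate needing a careful case split according to whether $\inf S_0$ and $\sup S_0$ are atoms, and, in the atomless-at-the-endpoint case, a limiting argument letting $x \downarrow a$ along $S_0$ inside the invariance identity \eqref{1222} -- justifying the interchange of limit and integral via the bounded-density bound \eqref{tkA} from Theorem~\ref{T:1} -- to extract the contradiction. If that limiting step proves too delicate directly, the fallback is the ``proof by contradiction'' template of Theorem~\ref{T:3}: assume $|S_0|=n$ for finite $n$ first, read off the resulting polynomial system from \eqref{1222}, show it has only the omnipresent solutions, and then separately rule out infinite support by the endpoint argument above.
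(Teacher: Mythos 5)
Your overall architecture matches the paper's: reduce to the endpoints of the support, split on whether the endpoint is an atom, and extract information by a limiting argument toward the endpoint. The intended conclusions of each stage (the endpoint must be an atom; the two endpoint atoms must exhaust the mass) are also the correct ones. However, the two decisive arguments are only gestured at, and they are where all the work lies. In the atomless-at-the-endpoint case, the paper (Lemma \ref{L:0122}) does not merely ``let $x \downarrow a$ inside \eqref{1222}'': it bounds $F(x)=\Pr(X\le x)$ by a union of three explicit events, obtains $F(x) \le F^2(x) + 2F(x)\Pr(Y \le \tfrac12(x+Y_2))$, \emph{divides by} $F(x)\to 0$ and passes to the limit to get $\Pr(Y \le \tfrac12 Y_2) \ge \tfrac12$, symmetrizes to conclude $\Pr(\tfrac12 Y < Y_2 < 2Y)=0$, and refutes that by conditioning $\theta$ on an interval $[y,3y/2]$ inside the support. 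That final multiplicative-gap refutation is a genuine idea with no counterpart in your plan, and ``the invariance identity forces an equality that can only hold if $\phi$ puts an atom at $a$'' is exactly the statement you would still need to prove. Likewise, in the atom case (Lemma \ref{L:0123}) the conclusion ``the two atoms each have mass $1/2$, leaving no room for a third point'' is asserted rather than derived; the paper needs a delicate equality-case analysis in a chain of inequalities, including a separate treatment of a possible atom at the midpoint $1/2$ and a second invocation of the multiplicative-gap argument. Your heuristic that the near-endpoint behaviour reduces to a ``one-sided binomial fixed-point equation'' does not obviously lead to a rigorous step.

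One concrete error: your ``alternative, cleaner route'' misreads \eqref{1223}. That identity holds for $\theta$-almost-every \emph{fixed} $y$, with only $x$ and $z$ integrated. In the form you state it --- $\Pr(|X-Z|>|X-Y|) = \Pr(|X-Z|<|X-Y|)$ for i.i.d.\ $(X,Y,Z)$ --- the variable $y$ has been integrated out as well, and the resulting identity is true for \emph{every} distribution by exchangeability of $Y$ and $Z$; it carries no information and cannot be the basis of a contradiction. So that branch of the plan is vacuous as written, and the finite-support fallback does not repair matters, since ruling out infinite support is deferred back to the endpoint argument that remains incomplete.
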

By considering the endpoints of the support of an invariant distribution, and scaling, this reduces to proving
\begin{quote}
{\bf equivalent assertion:} 
The only $\pi_{2,2}$-invariant distribution on $[0,1]$ whose support contains both $0$ and $1$ is the distribution $\delta_{0,1}$.
\end{quote}
We will prove this in two steps.
\begin{Lemma}
\label{L:0122}
There is no $\pi_{2,2}$-invariant distribution whose support contains  $0$ and which assigns zero weight to the point $0$.
\end{Lemma}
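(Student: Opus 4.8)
The plan is to exploit the density form of the invariance condition, \eqref{1224}, which on $S=[0,1]$ reads: for $\phi$-a.a.\ $y$,
\[
1 = \int\!\!\int \Big( 2\cdot\ind_{\{|x-z|<|x-y|\}} + \ind_{\{|x-z|=|x-y|\}} \Big)\,\phi(dz)\,\phi(dx),
\]
and to feed into it a sequence $y_m\downarrow 0$. This is legitimate: if $\phi$ is $\pi_{2,2}$-invariant with $0\in\support(\phi)$ and $\phi(\{0\})=0$, then every interval $(0,\eps)$ has positive $\phi$-mass, so no such interval is contained in the ($\phi$-null) exceptional set of bad $y$'s; hence one can pick distinct $y_m\in(0,1/m)$ for which the displayed identity holds.

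Next I would let $m\to\infty$. The integrand is bounded by $2$, and for each fixed $x>0$ (so $\phi\otimes\phi$-a.e.) it converges pointwise in $z$: since $|x-y_m| = x-y_m\uparrow x$, the main term tends to $2\cdot\ind_{\{|x-z|<x\}} = 2\cdot\ind_{\{0<z<2x\}}$, while the tie term $\ind_{\{|x-z|=|x-y_m|\}}$ is nonzero for at most one $m$ (the $y_m$ being distinct) and drops out. The one point needing care is the set $\{z=2x\}$: there $|x-z|=x$ is approached by $|x-y_m|=x-y_m<x$, so the term is $0$ for all large $m$, consistent with $\ind_{\{0<z<2x\}}=0$ there. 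Dominated convergence then gives $1 = 2\int\phi\big((0,2x)\big)\,\phi(dx) = 2\,\Pr(Y<2X)$ for $X,Y$ i.i.d.\ $\phi$ (using $\phi(\{0\})=0$), i.e.\ $\Pr(Y<2X)=\tfrac12$.

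From this rigid identity I would extract the contradiction by elementary means. By exchangeability $\Pr(X<2Y)=\tfrac12$ too, so $\Pr(Y\ge 2X)=\Pr(X\ge 2Y)=\tfrac12$; these events are disjoint apart from the $\phi\otimes\phi$-null point $(0,0)$, so together they carry full mass, and therefore $\phi\otimes\phi\big(\{(x,y):\max(x,y)<2\min(x,y)\}\big)=0$. This forces two things. First, the diagonal $\{x=y>0\}$ lies in that forbidden set, so $\phi$ has no atom in $(0,1]$, and by hypothesis none at $0$ either: $\phi$ is non-atomic. Second, $\support(\phi)$ cannot contain two distinct points $a,b$ with $\max(a,b)<2\min(a,b)$, since then a product of sufficiently small neighbourhoods of $a$ and $b$ would sit inside the forbidden set and have positive $\phi\otimes\phi$-mass. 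Hence any $a\in\support(\phi)$ with $a>0$ has $\support(\phi)\cap(a/2,2a)=\{a\}$, so $a$ is isolated in $\support(\phi)$ and thus an atom of $\phi$ --- contradicting non-atomicity. So $\support(\phi)$ contains no positive point; but $0\in\support(\phi)$ with $\phi(\{0\})=0$ means $0$ is not isolated in $\support(\phi)$, so there \emph{are} support points arbitrarily close to $0$ and distinct from it. This contradiction proves the Lemma.

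The hard part is the limiting step: verifying the dominated-convergence hypotheses and, in particular, checking that the tie term (which can be genuinely positive when $\phi$ has atoms at points of the form $2x$) really washes out in the limit --- this is exactly why it is important to approach $0$ from the right, keeping $|x-y_m|<x$. Everything downstream of the identity $\Pr(Y<2X)=\tfrac12$ is soft topology of supports.
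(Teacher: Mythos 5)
Your proof is correct and follows essentially the same strategy as the paper's: use the hypothesis that $0$ is a non-atomic support point to take a limit at the origin and extract $\Pr(Y\ge 2X)\ge\tfrac12$, symmetrize to conclude $\Pr(\tfrac12 Y<Y_2<2Y)=0$, and contradict the existence of two i.i.d.\ points within a factor of $2$ of each other. The paper implements the limit via an event-inclusion bound on the distribution function (dividing $F(x)\le F^2(x)+2F(x)\Pr(Y\le\tfrac12(x+Y_2))$ by $F(x)$ and letting $x\downarrow 0$) rather than your exact kernel-density identity plus dominated convergence, and closes by conditioning $\theta$ on an interval $[y,3y/2]$ of positive mass rather than your support-topology argument; both versions are sound.
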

\begin{proof}
For a proof by contradiction,
suppose such an invariant distribution $\theta$ exists.
Take $Y, Y_1, Y_2$ independent with  distribution $\theta$.
Invariance says that the random variable $X$ defined as
\begin{eqnarray*}
 X &=& Y_2 \mbox{ if } |Y- Y_1| < |Y- Y_2| \\
    &=& Y_1 \mbox{ if } |Y- Y_2| < |Y- Y_1| 
\end{eqnarray*}    
(with our usual convention about ties)
will also have distribution $\theta$.
Fix $0<x<1$.
From the definition we have the inclusion of events
\[ \{X \le x\} \subseteq A_1 \cup A_2 \cup A_3 \] 
where
\begin{eqnarray*}
A_1 & := & \{Y_1 \le x, \ Y_2 \le x\} \\
A_2 & := & \{Y_1 \le x, \ Y_2 > x, \ Y \ge \sfrac{1}{2}(Y_1+Y_2) \} \\
A_3 & := & \{Y_2 \le x, \ Y_1 > x, \ Y \ge \sfrac{1}{2}(Y_1+Y_2) \} 
\end{eqnarray*} 
and the $(A_i)$ are disjoint.
Now note that
\[ A_2 \subseteq  \{Y_1 \le x, \ Y \le  \sfrac{1}{2}(x+Y_2) \} \]
and similarly for $A_3$.
So by independence, the distribution function $F$ of $\theta$ satisfies
\begin{equation}
 F(x) \le F^2(x) + 2 F(x) \Pr(Y \le  \sfrac{1}{2}(x+Y_2)) .
 \label{F2F}
 \end{equation}
By hypothesis, $F(x) > 0 $ for $x > 0$ and $F(x) \downarrow 0$ as $x \downarrow 0$.
So we can divide both sides of (\ref{F2F}) by $F(x)$ and take limits as $x \downarrow 0$ and deduce
\[ \Pr(Y \le  \sfrac{1}{2}Y_2)  \ge \sfrac{1}{2} . \]
By symmetry we also have
$\Pr(Y_2 \le  \sfrac{1}{2}Y)  \ge \sfrac{1}{2} $,
and so 
\[ \Pr(\sfrac{1}{2}Y < Y_2 < 2 Y) = 0 . \]
But this is impossible for i.i.d. samples from a distribution $\theta$ on $(0,1]$,  because it would remain true
for $\theta$ conditioned on an interval of the form $[y,3y/2]$.
\qed
\end{proof}

Using Lemma \ref{L:0122} and reflection-symmetry of $[0,1]$, to prove the {\em equivalent assertion} and hence Theorem  \ref{P:0122} 
it will be sufficient to prove
\begin{Lemma}
 \label{L:0123} 
If $\theta$ is a $\pi_{2,2}$-invariant distribution and $\theta_0 > 0, \theta_1 > 0$ then $\theta = \delta_{0,1}$.
\end{Lemma}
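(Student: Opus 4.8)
The plan is a proof by contradiction. Write $\theta_0=\theta(\{0\})$, $\theta_1=\theta(\{1\})$ and $\sigma=\theta_0+\theta_1$; by hypothesis $\theta_0,\theta_1>0$. If $\sigma=1$ then $\theta$ is supported on $\{0,1\}$ and the binomial formula of Section~\ref{sec:binomial} (case $j=k=2$) forces $\theta_0=\theta_1=\tfrac12$, i.e.\ $\theta=\delta_{0,1}$. So I would assume $\sigma<1$, i.e.\ that $\theta$ charges $(0,1)$, and work towards a contradiction; this then shows $\theta$ is supported on $\{0,1\}$ and the case $\sigma=1$ finishes the proof.

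The organizing device is to split the invariance relation by the value of the current state. Let $Y,Y_1,Y_2$ be i.i.d.\ $\theta$ and $X$ the point of $\{Y_1,Y_2\}$ farther from $Y$ (usual tie rule), so $X\sim\theta$. On $\{Y=0\}$ one has $X=\max(Y_1,Y_2)$, on $\{Y=1\}$ one has $X=\min(Y_1,Y_2)$, and writing $\mu$ for the conditional law of $X$ on $\{0<Y<1\}$,
\[
\theta \;=\; \theta_0\,\nu_{\max} \;+\; \theta_1\,\nu_{\min} \;+\; (1-\sigma)\,\mu ,
\]
where $\nu_{\max},\nu_{\min}$ are the laws of the max and min of two i.i.d.\ copies of $\theta$, with CDFs $F^2$ and $2F-F^2$ ($F$ the CDF of $\theta$). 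Comparing CDFs shows $\mu$ has CDF $F\big[(1-2\theta_1)-(\theta_0-\theta_1)F\big]/(1-\sigma)$ on $[0,1)$; letting $x\uparrow1$ (so $F(x)\uparrow 1-\theta_1$) reads off the atom $\mu(\{1\})=\theta_1\big[\,1-(1-\theta_1)(\theta_0-\theta_1)/(1-\sigma)\,\big]$. Computing $\mu(\{1\})$ a second way, directly: given $0<Y<1$, the step lands at $1$ exactly when $1\in\{Y_1,Y_2\}$ and the other sample $z$ satisfies $z>2Y-1$ (precisely the condition that $1$ is the farther point), so $\mu(\{1\})=\theta_1^2+2\theta_1\,\Ex\big[\theta((2Y-1,1))\mid 0<Y<1\big]+(\text{a nonnegative tie correction})$. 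Equating the two expressions, the factor $\theta_1$ cancels, and (noting the integrand equals $1-\theta_1$ for $Y<\tfrac12$) the identity rearranges to
\[
(1-\theta_1)(1-2\theta_0) \;=\; 2(1-\theta_1)\,\theta\big((0,\tfrac12)\big) \;+\; 2(1-\sigma)\theta(\{\tfrac12\}) \;+\; 2\int_{(1/2,1)}\theta\big((2y-1,1)\big)\,\theta(dy) \;+\;(\text{tie terms}),
\]
with every term on the right after the first one nonnegative. Discarding those gives $\theta\big([0,\tfrac12)\big)\le\tfrac12$, and applying the same to the reflected (still invariant, being the image under the isometry $z\mapsto1-z$) measure gives $\theta\big((\tfrac12,1]\big)\le\tfrac12$.

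If $\theta(\{\tfrac12\})=0$ the two bounds partition $[0,1]$ and so are forced to be equalities; then the discarded nonnegative remainder above must vanish, in particular $\int_{(1/2,1)}\theta((2y-1,1))\,\theta(dy)=0$, and a short support argument (for $y$ just below any support point $y^*\in(\tfrac12,1)$ the set $(2y-1,1)$ is a neighbourhood of $y^*$, so the integrand is positive on a set of positive $\theta$-measure) forces $\theta\big((\tfrac12,1)\big)=0$, hence $\theta_1=\tfrac12$, and symmetrically $\theta_0=\tfrac12$; but then $\sigma=1$, contradicting $\sigma<1$. The remaining case $\theta(\{\tfrac12\})>0$ must be disposed of separately --- most directly by the analogous two-way computation of the atom $\mu(\{\tfrac12\})$ (equivalently $\Pr(X=\tfrac12)=\theta(\{\tfrac12\})$), whose $Y$-conditional form involves the ``distance ball'' of $\tfrac12$, namely $\{z:|z-\tfrac12|\le|z-Y|\}$. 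This last case, together with the bookkeeping forced by the tie rule (the nuisance terms live on configurations such as $\{z=2Y-1\}$, which are $\theta$-null in generic cases but not always), is the part of the argument I expect to be the main obstacle; everything else is the clean cancellation above plus the elementary two-point fact from Section~\ref{sec:binomial}.
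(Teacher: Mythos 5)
Your main computation checks out, and your route is genuinely different in organization from the paper's: you condition on the current state $Y$ and decompose $\theta=\theta_0\nu_{\max}+\theta_1\nu_{\min}+(1-\sigma)\mu$, then read the atom $\mu(\{1\})$ off the CDF identity $(1-\sigma)G=F\left[(1-2\theta_1)-(\theta_0-\theta_1)F\right]$, whereas the paper expands $\Pr(X=0)$ and $\Pr(X=1)$ directly and uses the elementary quadratic fact to bound $\Pr(Y>\tfrac12)+\tfrac12\Pr(Y=\tfrac12)$. I verified your algebra: the value of $\mu(\{1\})$, the rearranged identity with left side $(1-\theta_1)(1-2\theta_0)$, and the branch $\theta(\{\tfrac12\})=0$ are all correct and complete (the tie terms are nonnegative and are forced to vanish together with everything else once the two mirror inequalities are shown to be equalities, so they cause no difficulty in that branch, and your support argument killing $\int_{(1/2,1)}\theta((2y-1,1))\,\theta(dy)$ is sound).

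The genuine gap is the case $\theta(\{\tfrac12\})>0$, which you explicitly leave open. Your two bounds are $\theta([0,\tfrac12))\le\tfrac12$ and $\theta((\tfrac12,1])\le\tfrac12$; these sum only to $1-\theta(\{\tfrac12\})$, so when there is an atom at $\tfrac12$ they are not forced to be equalities and the entire equality analysis collapses. The extra information you retain (the term $2(1-\sigma)\theta(\{\tfrac12\})$) yields a constraint that is not self-contradictory --- for instance it is consistent with $\theta_0=\theta_1=\tfrac13$ --- so this case cannot be dismissed, and the proposed two-way computation of $\mu(\{\tfrac12\})$ is not carried out. The paper sidesteps the bifurcation by proving the slightly stronger mirror inequalities $\theta((\tfrac12,1])+\tfrac12\theta(\{\tfrac12\})\le\tfrac12$ and $\theta([0,\tfrac12))+\tfrac12\theta(\{\tfrac12\})\le\tfrac12$: with the half-weight of the atom included on both sides the two bounds sum to exactly $1$, equality is forced unconditionally, and the equality condition becomes $\theta_{1/2}\cdot\Pr(Y_2<1\mid Y_2>0)=0$, i.e.\ either $\theta_{1/2}=0$ (your main branch) or $\theta$ is already supported on $\{0,1\}$. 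You can recover exactly this strengthening inside your own framework: do not lump the $y=\tfrac12$ contribution to $\int\left[\theta((2y-1,1))+\tfrac12\theta(\{2y-1\})\right]\theta(dy)$ into the discarded remainder, but note it equals $(1-\sigma)+\tfrac12\theta_0=(1-\theta_1)-\tfrac12\theta_0\ge\tfrac12(1-\theta_1)$ per unit mass, which after dividing by $1-\theta_1$ gives precisely $\theta([0,\tfrac12))+\tfrac12\theta(\{\tfrac12\})\le\tfrac12$. Until that (or an equivalent treatment of the atom at $\tfrac12$) is written out, the proof is incomplete.
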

Here we write $\theta_s$ for $\theta(\{s\})$. \\
\begin{proof}
First note an elementary fact:
\begin{equation}
\mbox{if $0<x<1$ and $\beta \ge 0$ and $x \ge x^2 + 2x(1-x)\beta$ then $\beta \le 1/2$.}
\label{quadr}
\end{equation}
From the construction with $(Y, Y_1, Y_2, X)$ we have
\begin{eqnarray}
\theta_0 &=& \theta_0^2 + 2 \Pr(Y_1 = 0, Y_2> 0, Y>Y_2/2) + \Pr(Y_1 = 0, Y_2> 0, Y=Y_2/2) \nonumber \\
               &=& \theta_0^2 + 2 \theta_0 \left(  \Pr(2Y>Y_2, Y_2>0)    + \sfrac{1}{2} \Pr(2Y=Y_2, Y_2>0)                \right) \nonumber \\
               &=& \theta_0^2 + 2 \theta_0 (1 - \theta_0) \left(  \Pr(2Y>Y_2 \vert Y_2>0)    + \sfrac{1}{2} \Pr(2Y=Y_2 \vert Y_2>0)  \nonumber  \right) \\
               &\ge& \theta_0^2 + 2 \theta_0 (1 - \theta_0)  \left(  \Pr(Y>1/2) + \Pr(Y=1/2, Y_2< 1 \vert Y_2>0) + \sfrac{1}{2} \Pr(Y=1/2, Y_2 =1\vert Y_2>0)  \right) \nonumber \\
               &&   \label{ttt1} \\
               & = & \theta_0^2 + 2 \theta_0 (1 - \theta_0)  \left(  \Pr(Y>1/2) + \theta_{1/2} \  [  \Pr(Y_2< 1 \vert Y_2>0) +  \sfrac{1}{2} \Pr(Y_2 =1\vert Y_2>0) ] \right) \nonumber  \\
               & = & \theta_0^2 + 2 \theta_0 (1 - \theta_0)  \left(  \Pr(Y>1/2) + \theta_{1/2} \  [ \sfrac{1}{2} + \sfrac{1}{2}  \Pr(Y_2< 1 \vert Y_2>0) ] \right) \nonumber  \\
               & \ge & \theta_0^2 + 2 \theta_0 (1 - \theta_0)  \left(  \Pr(Y>1/2) + \sfrac{1}{2} \Pr(Y = 1/2)  \right). \label{ttt2}
 \end{eqnarray}
 By hypothesis $\theta_0 > 0$, so by (\ref{quadr}) we have  $\Pr(Y>1/2) + \sfrac{1}{2} \Pr(Y = 1/2) \le 1/2$.
 However we have the analogous sequence of equalities and inequalities for $\theta_1$, which imply  $\Pr(Y<1/2) + \sfrac{1}{2} \Pr(Y = 1/2) \le 1/2$, and so we must have
 \[ \Pr(Y>1/2) + \sfrac{1}{2} \Pr(Y = 1/2) = 1/2 = \Pr(Y<1/2) + \sfrac{1}{2} \Pr(Y = 1/2) . \]
 The quantity (\ref{ttt2}) now {\em equals} $\theta_0$, so the inequalities at (\ref{ttt1}) and (\ref{ttt2}) must in fact be equalities.
 In order for  the inequality leading to (\ref{ttt2}) to be an equality, we must have either $\theta_{1/2} = 0$ or $ \Pr(Y_2< 1 \vert Y_2>0) = 0$.
 In the latter case,  $\theta$ is supported on $\{0,1\}$  and so  $\theta = \delta_{0,1}$, as desired.
 So the remaining case is $\theta_{1/2} = 0$.
 In this case, for  the inequality leading to (\ref{ttt1}) to be an equality, we must have
 $\Pr(Y_2 < 2Y, Y < 1/2 \vert Y_2 > 0) = 0$.
 But, as at the end of the proof of Lemma \ref{L:0122}, this can only happen if $\Pr(0 < Y < 1/2) = 0$.
 By the analogous argument for $\theta_1$ we have $\Pr(1/2 < Y < 1) = 0$, and so the distribution is supported on $\{0,1\}$ and must be $\delta_{0,1}$, as desired.
\qed
\end{proof}

This line of argument can be extended to some other values of $(j,k)$ -- see \cite{mappings_long}.

\section{A class of tree spaces}
\label{sec:trees}

\setlength{\unitlength}{0.1in}
\begin{figure}
\begin{picture}(30,22)(-8,0)
\put(10,10){\line(-1,0){5}}
\put(10,10){\line(1,1){5}}
\put(10,10){\line(5,-4){5}}
\put(15,6){\line(2,1){8}}

\put(5,10){\line(-3,4){3}}
\put(2,14){\circle*{1}}
\put(5,10){\line(-1,-3){2}}
\put(3,4){\circle*{1}}

\put(15,15){\line(-1,2){1.5}}
\put(13.5,18){\circle*{1}}
\put(15,15){\line(1,0){4}}
\put(19,15){\circle*{1}}

\put(15,6){\line(0,-1){4}}
\put(15,2){\circle*{1}}

\put(23,10){\line(1,1){5}}
\put(28,15){\circle*{1}}
\put(23,10){\line(1,-1){7}}
\put(30,3){\circle*{1}}
\end{picture}
\caption{A BTL space $S$ with $|S| = 7$.}
\label{Fig:BTL}
\end{figure}
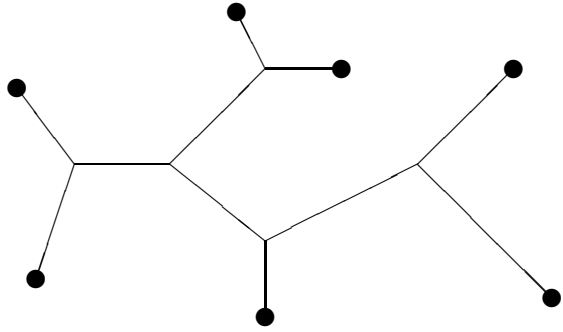

In this section we consider  binary\footnote{Essentially the same argument works without the {\em binary} assumption.} tree leaves (BTL), illustrated in Figure \ref{Fig:BTL}, as a class of finite spaces.
Here $S$ is the finite set of leaves; 
the edges have lengths which serve  to determine the distance between two leaves  as the length of the unique path 
between them; the edges also define $|S| - 2$ branchpoints.
To ``break symmetry" we assume
\begin{equation}
\mbox{ all distances $(d(s_i,s_j), j \ne i)$ are distinct.}
\label{distinct}
\end{equation}
We claim that, 
as suggested by the general picture from numerics, 
 for $k = 2$ there are no invariant measures other
than the omnipresent ones.  
An invariant measure supported on a subset of leaves is an invariant measure on the induced spanning tree of that subset,
so to prove that claim it suffices to prove
\begin{Theorem}
\label{Conj}
On a BTL space $S$ with $|S| \ge 3$ and satisfying \eqref{distinct}, and for $k=2$,
there are no invariant measures with full support.
\end{Theorem}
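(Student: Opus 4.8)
The plan is to run the same ``proof by contradiction'' strategy as in Theorem~\ref{T:3} and Theorem~\ref{P:0122}, exploiting the tree structure to isolate a single leaf and force a contradiction. Suppose $\theta$ is a $\pi_{1,2}$-invariant distribution (by Theorem~\ref{P:k=2} this is the same as $\pi_{2,2}$-invariant) with full support on the leaf set $S$, $|S|\ge 3$. The key geometric feature of a BTL space is that each leaf $s$ hangs off a pendant edge ending at a branchpoint $v(s)$; removing that edge splits $S\setminus\{s\}$ into the other leaves, all of which are ``on the far side'' of $v(s)$ from $s$. More usefully, for any leaf $s$ the other leaves are partitioned by the first edge of their path to $s$, and the relative order of the distances $d(s,\cdot)$ is governed by how far along that shared initial segment the branchpoints sit. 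I would first identify a distinguished leaf --- the natural candidate is the leaf $a$ realizing the \emph{smallest} pairwise distance $d(a,b)$ among all pairs, as in Theorem~\ref{T:3}; by \eqref{distinct} this pair is unique, and $a,b$ must be the two leaves hanging off a common branchpoint (a ``cherry'').

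Next I would write out the invariance equation for the atom $\theta_a$, in the style of \eqref{cc}. With $Y,Y_1,Y_2$ i.i.d.\ $\theta$ and $X$ the $\theta$-sample closest to $Y$, the event $\{X=a\}$ decomposes according to which of $Y_1,Y_2$ equals $a$ and which region $Y$ lies in. Because $a$ is in a cherry with $b$, the only leaf $y$ for which there exists \emph{any} source point $s$ with $d(s,a)<d(s,y)$ and $a$ could still be selected over $y$ is $y=b$ (for all other leaves $y$, $a$ is never strictly closer than $y$ to a third leaf, or only from a set we can control); more precisely I expect to get an equation of the shape
\[
\theta_a \;=\; \theta_a^2 \;+\; 2\theta_a\bigl(\Pr(Y\in R_a)\bigr)\;+\;(\text{tie terms}),
\]
where $R_a$ is the set of source leaves from which $a$ is strictly closer than the other sampled leaf whenever that other leaf is $b$ --- and crucially $R_a$ is a \emph{proper} subset of $S$, because the leaf $b$ itself, and any leaf on $b$'s side of the cherry branchpoint, lies strictly closer to $b$ than to $a$. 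Cancelling $\theta_a$ (legitimate since $\theta_a>0$) gives an inequality $1\le \theta_a+2(1-\theta_a)\beta$ with $\beta=\Pr(Y\in R_a\mid \cdots)<1$ forced below $1$ by full support, and then the elementary fact \eqref{quadr} (or its obvious variant) pins $\beta$ and $\theta_a$ down, exactly as in Lemma~\ref{L:0123}.

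I would then argue that the resulting equality conditions are inconsistent with $|S|\ge 3$ and full support. The symmetric statement for $\theta_b$ gives a matching constraint, and combining the two --- much as $\Pr(Y>1/2)+\tfrac12\Pr(Y=1/2)=1/2$ was forced in Lemma~\ref{L:0123} --- should force $\theta$ to be concentrated on the two-point cherry $\{a,b\}$, i.e.\ $\theta=\delta_{a,b}$, contradicting full support. If instead one wants to avoid picking the globally-minimal pair, one can induct: show that any full-support invariant $\theta$ on $S$ restricts (via the cherry-collapsing argument) to a full-support invariant measure on a BTL space with fewer leaves, bottoming out at $|S|=3$ where Theorem~\ref{T:3} applies; the distinctness hypothesis \eqref{distinct} is what makes a cherry exist and makes the relevant distance comparisons strict.

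The main obstacle I anticipate is the bookkeeping in the $\{X=a\}$ decomposition: one must verify that for leaves $y\ne b$ the set of source leaves $s$ with $d(s,a)<d(s,y)$ either is empty or contributes in a way that still keeps the effective $\beta$ strictly below $1$, and one must handle the tie cases carefully (ties between $d(s,a)$ and $d(s,y)$ for third leaves $s$), using \eqref{distinct} to rule out ties among leaf-leaf distances while remembering that a source \emph{point} here is itself always a leaf. Making the ``$R_a$ is a proper subset'' claim precise --- equivalently, exhibiting even one leaf from which $a$ is \emph{not} the closer of $\{a,b\}$ --- is the crux, and the cherry structure is exactly what delivers it: the leaf $b$ is closer to $b$ than to $a$, so $b\notin R_a$, giving the strict inequality that powers the contradiction.
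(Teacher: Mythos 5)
Your overall strategy --- fix a distinguished leaf, expand the fixed-point equation for its atom, cancel the atom, and derive a quadratic contradiction --- is the right genre, but the specific plan has gaps at exactly the point you flag as ``the main obstacle,'' and that obstacle is not resolved by your sketch. The correct expansion of $\{X=a\}$ is
\[
\theta_a \;=\; \theta_a^2 \;+\; 2\,\theta_a\sum_{y\neq a}\theta_y\,\Pr\bigl(d(Y,a)<d(Y,y)\bigr),
\]
a sum over \emph{every} competitor leaf $y$, each with its own winning region $C_{a,y}=\{s: d(s,a)<d(s,y)\}$. Your reduction to a single region $R_a$ attached to the single competitor $y=b$ (``for all other leaves $y$, $a$ is never strictly closer\dots or only from a set we can control'') is precisely the unproved step, and it is not true as stated: for $y\neq b$ the region $C_{a,y}$ can be anything from $\{a\}$ to almost all of $S$ depending on edge lengths. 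Two further problems: (1) the minimal-distance pair need not be a cherry --- a leaf with a short pendant edge can be closer to a leaf across an internal edge than to its own cherry partner, so your identification of $a,b$ fails; (2) even for a genuine topological cherry, \eqref{distinct} forces the two pendant edges to have different lengths, so one of $a,b$ is strictly closer than the other to \emph{every} third leaf. The roles of $a$ and $b$ are therefore not symmetric, and the pair of complementary constraints you want to combine (mimicking $\Pr(Y>1/2)+\Pr(Y<1/2)$ in Lemma~\ref{L:0123}) does not materialize. The fallback induction is also unjustified: collapsing a cherry and pushing $\theta$ forward does not produce an invariant measure on the smaller tree, since invariance is defined through the metric of the original space.

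The paper avoids all of this by choosing the distinguished leaf differently. It first splits into two cases: either some leaf has mass $>1/2$ (handled by a one-line quadratic), or there is a \emph{centroid} branchpoint whose three components $A_1,A_2,A_3$ each carry mass in $(0,\tfrac12]$. It then takes $s_1$ to be the leaf closest to the centroid and uses the sufficient condition ``exactly one of $Y_1,Y_2$ is $s_1$, and $Y_0$ and the other sample lie in different components.'' This condition wins against \emph{every} competitor simultaneously (any competitor in a different component from the source is farther from the source than $s_1$ is), which is exactly the uniform control over all $y$ that your cherry-based $R_a$ cannot supply. The resulting lower bound is a quadratic in $(\theta(A_1),\theta(A_2),\theta(A_3))$ that is incompatible with the centroid constraints. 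If you want to salvage your approach, you would need to replace the cherry by some choice of leaf for which a single geometric condition dominates all competitors at once --- which is, in effect, what the centroid construction provides.
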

\begin{proof}
As in previous proofs,
consider a $\pi_{1,2}$ - invariant distribution $\theta$ with full support on $S$, where $|S| \ge 3$.
Take $Y_0, Y_1, Y_2$ independent with  distribution $\theta$.
Invariance says that the random variable $X$ defined as
\begin{eqnarray}
 X &=& Y_1 \mbox{ if } d(Y_0,Y_1) < d(Y_0,Y_2) \nonumber\\
    &=& Y_2 \mbox{ if } d(Y_0, Y_2) < d(Y_0, Y_1) \label{recurse}
\end{eqnarray}    
will also have distribution $\theta$.
We proceed to a proof by contradiction.

We quote an elementary fact.
\begin{Lemma}
For any probability distribution $\theta$ on a BTL space $S$, either\\
(i) $\theta(s_0) > \frac{1}{2}$ for some $s_0 \in S$\\
or (ii) there exists a centroid, that is a branchpoint such that the associated partition $S = \cup_{i=1}^3 A_i$ 
of leaves satisfies $0 < \theta(A_i) \le \frac{1}{2}$ for all $i$.
\end{Lemma}
Consider case (i).
That is, suppose $\theta$ is invariant and $\theta(s_0)  \in (\frac{1}{2}, 1)$.
From the invariance relation \eqref{recurse}, in order that $X = s_0$ it is sufficient that
\[ (Y_0 \neq s_0, Y_1 = s_0, Y_2 = s_0) \mbox{ or } (Y_0 = s_0, Y_1 \mbox{ or } Y_2 = s_0) .\]
So, setting $\theta(s_0) = x \in (\frac{1}{2}, 1)$,
\[ x \ge (1-x)x^2 + x(2x - x^2) . \]
Cancelling $x$, this reduces to $2x^2 - 3x +1 \ge 0$, but this inequality is false for $ x \in (\frac{1}{2}, 1)$.

Now consider case (ii).
There is a centroid branchpoint defining a partition $S = \cup_{i=1}^3 A_i$.
Consider the leaf $s_1$ which is closest to the centroid.
We may assume $s_1 \in A_1$.
From the invariance relation \eqref{recurse}, in order that $X = s_1$ it is sufficient that the following condition (*) holds:
\begin{quote}
exactly one of $(Y_1,Y_2)$ equals $s_1$\\
and\\
$Y_0$ and the other\footnote{The leaf from $(Y_1,Y_2)$ that is not $s_1$.} $Y$ are in different components of $\cup_{i=1}^3 A_i$.
\end{quote}
For instance, if $Y_0 \in A_2$ and $Y_1 = s_1$ and $Y_2 \in A_3$, then
$Y_2$ is some leaf in $A_3$ which is farther from the centroid than is $s_1$, so $d(Y_0,s_1) < d(Y_0,Y_2)$.
The other possibilities are similar.

By considering the three possibilities for `` different components of $\cup_{i=1}^3 A_i$" we see that
the probability of (*) equals $\theta(s_1)$ times
\[
 2 \theta(A_1) \theta(A_2) + 2 (\theta(A_1) - \theta(s_1)) \theta(A_2) 
 \]\[
 +  2 \theta(A_1) \theta(A_3) + 2 (\theta(A_1) - \theta(s_1)) \theta(A_3) 
 \]\[
 +  4 \theta(A_2) \theta(A_3) 
\]
which rearranges to 
\[
 \theta(s_1)
[ 4(\theta(A_1) \theta(A_2) + \theta(A_1) \theta(A_3) + \theta(A_2) \theta(A_3)) 
- 2 \theta(s_1)(\theta(A_2) + \theta(A_3)) ]
 \] 
 \begin{equation}
  = \theta(s_1) \cdot B, \mbox{ say.}
  \label{ttt}
  \end{equation}
A disjoint sufficient condition for $X = s_1$ is that $Y_1 = Y_2  = s_1$, which has probability $\theta^2(s_1)$. 
So
\[ \theta(s_1) = \Pr(X = s_1) \ge \theta(s_1) (B + \theta(s_1)) . \]
Cancelling the $\theta(s_1)$ term,
\[ 1 \ge  4(\theta(A_1) \theta(A_2) + \theta(A_1) \theta(A_3) + \theta(A_2) \theta(A_3)) 
- 2 \theta(s_1)(\theta(A_2) + \theta(A_3) - \sfrac{1}{2})  . \]
Because $\sum_i \theta(A_i) = 1$ we have $\theta(A_2) + \theta(A_3) - \sfrac{1}{2} =  \sfrac{1}{2} - \theta(A_1)$
and
\[  2(\theta(A_1) \theta(A_2) + \theta(A_1) \theta(A_3) + \theta(A_2) \theta(A_3)) = 1 - \sum_i \theta^2(A_i) \]
and the inequality above reduces to 
\[ 1 \ge 2 - 2  \sum_i \theta^2(A_i) - 2 \theta(s_1) ( \sfrac{1}{2} - \theta(A_1)) . \]
Because $\theta(s_1) \le \theta(A_1)$, this implies
\begin{equation}
C : =  \sum_i \theta^2(A_i) +  \theta(A_1) ( \sfrac{1}{2} - \theta(A_1)) \ge  \sfrac{1}{2} .
 \label{tt}
 \end{equation}
We need to show that $C \ge  \sfrac{1}{2} $ cannot in fact occur under the constraints $0 < P(A_i) \le \sfrac{1}{2} $ and
$\sum_i \theta(A_i) = 1$.
Given $\theta(A_1) = x$, the quantity $C$ is maximized when $(\theta(A_2), \theta(A_3)) = ( \sfrac{1}{2},  \sfrac{1}{2} - x)$
and so 
\[ C \le x^2 + \sfrac{1}{4} + (\sfrac{1}{2} - x)^2 + x (\sfrac{1}{2} - x) 
= x^2 - \sfrac{1}{2} x + \sfrac{1}{2}.
\]
This implies that $C <  \sfrac{1}{2}$ on the open interval $x \in (0,\sfrac{1}{2})$, and we cannot have $x = 0$ or $\sfrac{1}{2}$
by the $\theta(A_i) > 0$ constraint.
\end{proof}
\qed


\end{document}